\documentclass[11pt, reqno]{amsart}
\usepackage{amssymb,amsmath,epsfig,mathrsfs, enumerate, xparse, mathtools}
\usepackage[french, english]{babel}
\usepackage{lipsum}
\usepackage[pagebackref,colorlinks=true,linkcolor=blue,citecolor=blue]{hyperref}

\usepackage{amsmath}
\usepackage{amsfonts}
\usepackage{amssymb}
\usepackage{amsthm}
\usepackage{epsfig,graphics,mathrsfs}
\usepackage{graphicx}
\usepackage{dsfont}

\usepackage[usenames, dvipsnames]{color}

\usepackage[pagewise]{lineno}
\usepackage[nodisplayskipstretch]{setspace}
\usepackage{graphicx}\usepackage[normalem]{ulem}
\usepackage{fancyhdr}
\usepackage[margin=2.5cm]{geometry}
\usepackage{hyperref}
\hypersetup{
	colorlinks   = true,
	urlcolor     = blue,
	linkcolor    = blue,
	citecolor   = red ,
	bookmarksopen=true
}

\theoremstyle{plain}
\newtheorem{thrm}{Theorem}[section]
\newtheorem{lemma}[thrm]{Lemma}

\usepackage[usenames]{color}

\allowdisplaybreaks
\begin{document}
	\newcommand{\sn}{\mathbb{S}^{n-1}}
	\newcommand{\SL}{\mathcal L^{1,p}( D)}
	\newcommand{\Lp}{L^p( Dega)}
	\newcommand{\CO}{C^\infty_0( \Omega)}
	\newcommand{\Rn}{\mathbb R^n}
	\newcommand{\Rm}{\mathbb R^m}
	\newcommand{\R}{\mathbb R}
	\newcommand{\Om}{\Omega}
	\newcommand{\Hn}{\mathbb H^n}
	\newcommand{\A}{\alpha }
	\newcommand{\B}{\beta}
	\newcommand{\eps}{\ve}
	\newcommand{\BVX}{BV_X(\Omega)}
	\newcommand{\p}{\partial}
	\newcommand{\IO}{\int_\Omega}
	\newcommand{\bG}{\boldsymbol{G}}
	\newcommand{\bg}{\mathfrak g}
	\newcommand{\bz}{\mathfrak z}
	\newcommand{\bv}{\mathfrak v}
	\newcommand{\Bux}{\mbox{Box}}
	\newcommand{\e}{\ve}
	\newcommand{\X}{\mathcal X}
	\newcommand{\Y}{\mathcal Y}
	\newcommand{\Z}{\mathcal Z}
	\newcommand{\I}{\mathcal I}
	\newcommand{\la}{\lambda}
	\newcommand{\vf}{\varphi}
	\newcommand{\rhh}{|\nabla_H \rho|}
	\newcommand{\Ba}{\mathcal{B}_\beta}
	\newcommand{\Za}{Z_\beta}
	\newcommand{\ra}{\rho_\beta}
	\newcommand{\n}{\nabla}
	\newcommand{\vt}{\vartheta}
	\newcommand{\its}{\int_{\{y=0\}}}
	\newcommand{\py}{\partial_y^a}
\newcommand{\sa}{\langle}
\newcommand{\da}{\rangle}
\newcommand{\mi}{\mathscr I}
\newcommand{\F}{\mathscr F}
	
	\numberwithin{equation}{section}

	\newcommand{\RN} {\mathbb{R}^N}
	\newcommand{\Sob}{S^{1,p}(\Omega)}
	\newcommand{\Dxk}{\frac{\partial}{\partial x_k}}
	\newcommand{\Co}{C^\infty_0(\Omega)}
	\newcommand{\Je}{J_\ve}
	\newcommand{\beq}{\begin{equation}}
		\newcommand{\bea}[1]{\begin{array}{#1} }
			\newcommand{\eeq}{ \end{equation}}
		\newcommand{\ea}{ \end{array}}
	\newcommand{\eh}{\ve h}
	\newcommand{\Dxi}{\frac{\partial}{\partial x_{i}}}
	\newcommand{\Dyi}{\frac{\partial}{\partial y_{i}}}
	\newcommand{\Dt}{\frac{\partial}{\partial t}}
	\newcommand{\ds}{\displaystyle}
	\newcommand{\Zt}{{\mathcal Z}^{t}}
	\newcommand{\ve}{\varepsilon}
	\newcommand{\D}{\operatorname{div}}
	\newcommand{\G}{\mathscr{G}}
	\newcommand{\w}{\tilde{w}}
	\newcommand{\s}{\sigma}

\title[Sharp order of vanishing, etc.]{Quantitative uniqueness for parabolic equations with H\"older potentials
}

\author{Agnid Banerjee}
\address{School of Mathematical and Statistical Sciences\\ Arizona State University \\ Tempe, AZ 85287-1804\\USA}\email[Agnid Banerjee]{agnid.banerjee@asu.edu}

\author{Nicola Garofalo}
\address{School of Mathematical and Statistical Sciences\\ Arizona State University \\ Tempe, AZ 85287-1804\\USA}\email[Nicola Garofalo]{nicola.garofalo@asu.edu}

\subjclass{35A02, 35B60, 35K05}

\keywords{Sharp order of vanishing, unique continuation, parabolic}

\medskip

\maketitle
	
\selectlanguage{english}

\begin{abstract}
In this note we  derive a space-like quantitative uniqueness result for parabolic operators with H\"older zero-order term that interpolates between the Donnelly-Fefferman and the Bourgain-Kenig estimate.  This generalizes a recent result of Teng, Wang and Zhu for the time-independent Schr\"odinger operator with a H\"older potential.

\end{abstract}

\tableofcontents

\section{Introduction}

In a recent work, Teng, Wang and Zhou \cite[Theor. 1 \& Cor. 1]{TWZ} established the following interesting quantitative vanishing-order estimate. Let $u$ be a solution in a ball $B_{10}$ to the equation 
\begin{equation}\label{lap}
\Delta u = V(x) u, 
\end{equation}
where the potential satisfies $||V||_{C^{0,\beta}} \le M$ for some $\beta\in (0,1)$. Assume that $||u||_{L^\infty(B_{10})}\le C_0$ and $||u||_{L^\infty(B_1)}\ge 1$. Then the vanishing-order of $u$ in $B_1$ is at most $CM^{\frac{2}{\beta+3}}$, where $C$ is a constant which depends on $n, \beta$ and $C_0$.

We recall that, when $\Delta = \Delta_M$ is the Laplacian on a compact real-analytic manifold, and $V(x) \equiv -\la$,  Donnelly and Fefferman proved in \cite{DF1, DF2} that the sharp order of vanishing of $u$ is $\cong \sqrt \la$. On the other hand, Bourgain and Kenig proved in \cite{BK} that when $||V||_{L^\infty(B_{10})} \le M$, the maximal order of vanishing is bounded by $C M^{2/3}$. This result is sharp when $V$ is complex-valued in view of Meshkov's example in \cite{Me}. 

Interestingly, the exponent $M^{\frac{2}{\beta+3}}$ for H\"older potentials in \cite{TWZ} 
interpolates between $C^1$ and $L^\infty$ potentials. In other words, it coincides with the  $ M^{1/2}$ Donnelly-Fefferman type vanishing-order estimate for $C^{1}$ potentials when $\beta=1$ (for this, see also \cite{Bk}),
and with the $M^{2/3}$ Bourgain-Kenig estimate when $\beta = 0.$

The present work has been inspired by the cited result in \cite{TWZ}. We establish an optimal space-like order of vanishing for solutions to the following class of parabolic equations in the space-time cylinder $Q_4$
\begin{align}\label{meq}
Lu = u_t+\D(A(x,t)\n u)  + V(x,t)u=0.
\end{align}
The coefficient matrix $A$ satisfies the following assumptions:
\begin{itemize}
\item[(i)] there exist $\Lambda \geq 1$  such that for all $x\in \Rn$ and $t\in [0,\infty)$, one has for every $\xi\in \Rn$
\begin{equation}\label{ell}
\Lambda^{-1} |\xi|^2 \leq \sa A(x,t)\xi,\xi\da\leq \Lambda |\xi|^2\ ;
\end{equation}
\item[(ii)] there exist $C\geq 0$ such that for all $x, y\in \Rn$ and $s, t\in [0,\infty)$, one has
\begin{equation}\label{ass}
|a_{ij}(x,t) - a_{ij}(y,s) |\leq C(|x-y| + |t-s|^{1/2}).
\end{equation}
\end{itemize}
On the zero-order term $V$ we assume that for some $\beta \in (0,1)$, one has
\begin{equation}\label{vassump}
||V||_{H^{\beta}(Q_4)} := ||V||_{L^{\infty}(Q_4)} + \sup_{ \{(x, t), (y, s) \in Q_4: (x, t) \neq (y, s)\}} \frac{|V(x, t) - V(y, s)|}{|x-y|^{\beta} + |t-s|^{\beta/2}}< \infty. \end{equation}
We let 
\begin{equation}\label{lm}
||V||_{H^{\beta}(Q_4)} = M,
\end{equation}
and, without loss of generality, we assume that $M \geq 1$. On a solution to \eqref{meq} we introduce the following quantity, whenever it is well-defined:
\begin{equation}\label{theta}
\Theta := \frac{\int_{Q_4} u^2(x,t) dx dt}{\int_{B_1} u^2(x, 0) dx}.
\end{equation}
The following is our main result.

\begin{thrm}[Space-like vanishing order]\label{main}
Let $u$ be a solution to \eqref{meq} in $Q_4$ such that $u(\cdot, 0) \not \equiv  0$ in $B_1$. There exists a universal constant $N>0$ such that, with
\begin{equation}\label{K}
\mathcal{K}= N\operatorname{log}(N\Theta)+N M^{\frac{2}{\beta+3}},
\end{equation} 
one has for all $r \le 1/2$:  
\begin{align}\label{df}
\int_{B_r} u^2(x,0)dx \geq   r^{\mathcal{K}} \int_{B_1} u^2(x,0)dx.
\end{align}
\end{thrm}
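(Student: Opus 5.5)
The plan follows the strategy of Teng--Wang--Zhu, adapted to the parabolic setting via the Escauriaza--Fernández--Vessella approach. The key device is a \emph{smoothing of the potential}. Fix a scale $\delta\in(0,1)$ to be chosen later and mollify $V$ in space-time at parabolic scale $\delta$, giving $V_\delta = V\ast\phi_\delta$. From \eqref{vassump} one gets
\[
\|V-V_\delta\|_{L^\infty(Q_3)}\le CM\delta^\beta
\qquad\text{and}\qquad
\|\nabla_x V_\delta\|_{L^\infty}+\delta\,\|\partial_t V_\delta\|_{L^\infty}\le CM\delta^{\beta-1}.
\]
We then write \eqref{meq} as
\[
u_t+\D(A\nabla u)+V_\delta u = -(V-V_\delta)u .
\]
The point is that the "smooth part" $V_\delta$ has a controlled gradient. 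In a Carleman estimate it therefore behaves like a $C^1$ potential, costing a weight parameter of size $\alpha\gtrsim \|\nabla V_\delta\|^{2/3}$, or equivalently $\|V_\delta\|$-type terms of order $\alpha^{1/2}$. The rough part $V-V_\delta$ is small in $L^\infty$, with the Bourgain--Kenig cost $\alpha\gtrsim \|V-V_\delta\|_\infty^{2/3}$.

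The main tool is a parabolic Carleman estimate with the Escauriaza--Fernández--Vessella weight. For $w$ supported in $B_4\times[0,1/2)$ one would take
\[
\sigma_a(t)^{-\alpha}e^{-|x|^2/(8t)}, \qquad \sigma \approx t,
\]
and prove an estimate of the form
\[
\alpha\int \sigma^{-2\alpha} w^2 G + \int\sigma^{1-2\alpha}|\nabla w|^2G \lesssim \int \sigma^{1-2\alpha}|Lw|^2G + \text{boundary terms at } t=0,
\]
for $\alpha$ large depending on the Lipschitz constant of $A$. I will take this as the available earlier machinery. The zero-order term is absorbed in two parts:
\begin{enumerate}
\item The rough part: $\int\sigma^{1-2\alpha}|V-V_\delta|^2w^2G\le C M^2\delta^{2\beta}\int\sigma^{1-2\alpha}w^2G$. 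This is absorbed by the $\alpha\int\sigma^{-2\alpha}w^2G$ term, using $\sigma\le 1$, provided $\alpha\gtrsim M^2\delta^{2\beta}$. Doing the scaling more carefully in the $\alpha$-dependent localized region $\sigma\sim 1/\alpha$, as in the elliptic case, the true requirement is $\alpha\gtrsim (M\delta^\beta)^{2/3}$.
\item The smooth part: rather than absorbing $V_\delta w$ crudely, which would cost $\alpha\gtrsim M$, one integrates by parts in the cross term $2\int \sigma^{1-2\alpha}V_\delta w\,(\text{symmetric part})G$. This produces terms controlled by $\|\nabla V_\delta\|\,\sigma|x|\,w^2$ and $\sigma\,\|\partial_tV_\delta\|\, w^2$. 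On the effective region these are absorbed when $\alpha\gtrsim \|\nabla V_\delta\|^{2/3}+\|V_\delta\|^{1/2}$, that is, when $\alpha\gtrsim (M\delta^{\beta-1})^{2/3}+M^{1/2}$.
\end{enumerate}

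Next we balance the two requirements. Setting $M\delta^\beta\sim(M\delta^{\beta-1})\cdot(\text{scale }\alpha^{-1/2})$ and choosing $\delta\sim\alpha^{-1/2}$ (the natural spatial scale of the weight), the constraints become
\[
\alpha\gtrsim M\alpha^{-(\beta-1)/2}\cdot\alpha^{-1/2}=M\alpha^{-\beta/2}\quad\text{(in the squared form)},
\]
so that $\alpha^{(\beta+3)/2}\gtrsim M$, i.e.\ $\alpha\gtrsim M^{2/(\beta+3)}$. Checking the endpoints confirms this: $\beta=0$ gives $M^{2/3}$ and $\beta=1$ gives $M^{1/2}$. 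I expect this step to be the main obstacle. The difficulty is making the integration-by-parts estimate for the smooth part compatible with the parabolic Gaussian weight, since the time derivative of $V_\delta$ appears with the worse factor $\delta^{\beta-2}$. One has to exploit that $\sigma\lesssim 1/\alpha$ on the region that matters, together with the choice $\delta=\alpha^{-1/2}$, which converts $\delta^{\beta-2}$ into $\alpha^{1-\beta/2}$ times $\sigma$. I would first try $\delta=\alpha^{-1/2}$ directly and verify that all error terms carry the factor $M\alpha^{-(\beta+3)/2}$.

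Finally, we deduce \eqref{df} from the Carleman estimate in the standard way of Escauriaza--Fernández--Vessella. First, energy and Caccioppoli-type estimates, with constants depending on $M$ only polynomially and absorbed into $\log\Theta$, control $u$ on $Q_3$. Second, we apply the Carleman estimate to $w=u\eta$ with a cutoff $\eta$ in space and in time near $t=1/2$, and with the time-shift parameter $a$ chosen comparable to $r^2$. Third, the contribution of the cutoff region is bounded by $\Theta\,e^{C\alpha}$. Fourth, the $t=0$ boundary term of the weighted Gaussian integral, evaluated at parameter $a\sim r^2$, captures $\int_{B_r}u^2(x,0)$ with weight $a^{-\alpha}$. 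Comparing yields $\int_{B_r}u^2(\cdot,0)\ge r^{C\alpha}\int_{B_1}u^2(\cdot,0)$ with $\alpha = N\log(N\Theta)+NM^{2/(\beta+3)}$, which is \eqref{K}. The restriction $r\le1/2$ is needed so that $a$ falls within the admissible range of the Carleman estimate.
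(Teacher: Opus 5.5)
There is a genuine gap, and it sits at the step you yourself flag as ``the main obstacle'': you never establish the Carleman estimate for the smooth part of the potential, and the cost you assign to it is wrong. Because you mollify in space--time, $V_\delta$ depends on $t$. You therefore cannot quote the available estimate, Theorem~\ref{carleman}, which is for \emph{time-independent} $V_1\in C^1(B_4)$ and $V_2\in L^\infty(B_4)$. That estimate also carries the variable-coefficient term $N^{2\alpha}\alpha^{2\alpha}\sup_t\int(w^2+|\nabla w|^2)$ and the support restriction $t<1/(4\lambda)$, both of which your schematic version omits. You would have to redo the integration by parts in the cross term with $A(x,t)$ variable and $\partial_t V_\delta=O(M\delta^{\beta-2})$, and you do not. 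Moreover, the requirement you state, $\alpha\gtrsim\|\nabla V_\delta\|^{2/3}+\|V_\delta\|^{1/2}$, gives no improvement if taken literally. Indeed $(M\delta^{\beta-1})^{2/3}+(M\delta^{\beta})^{2/3}\ge c_\beta M^{2/3}$ for every $\delta$, and with $\delta=\alpha^{-1/2}$ it forces $\alpha\gtrsim M^{2/(2+\beta)}$, which is worse than $M^{2/(\beta+3)}$. Your balancing step reaches $M^{2/(\beta+3)}$ only by inserting an extra factor ``scale $\alpha^{-1/2}$'' next to $\|\nabla V_\delta\|$. That is equivalent to assuming the correct $C^1$ cost $\alpha\gtrsim\|V_\delta\|_{C^1}^{1/2}$, which is precisely what needed proof.

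The paper avoids all of this by freezing time. It mollifies only $V_0(x)=V(x,0)$, in $x$, and applies Theorem~\ref{carleman} with $V_1=V_\varepsilon$ and $V_2=V_0-V_\varepsilon$. By \eqref{est1} this costs $M^{1/2}\varepsilon^{-(1-\beta)/2}+M^{2/3}\varepsilon^{2\beta/3}$, which is minimized at $\varepsilon\sim M^{-1/(\beta+3)}\sim\alpha^{-1/2}$ with value $\sim M^{2/(\beta+3)}$; so your scale heuristic is right. The time variation is then just an $L^\infty$ perturbation on the short support. From $(V(x,t)-V(x,0))^2\le M^2t^\beta$, $\lambda t\le 1/2$ and $\sigma_a\le 1/\lambda$ one gets an error $NM^2\alpha^{-1-\beta}\int\sigma_a^{-2\alpha}w^2G_a\,dX$. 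This is absorbed by $\alpha^2\int\sigma_a^{-2\alpha}w^2G_a\,dX$ exactly when $\alpha\gtrsim M^{2/(\beta+3)}$, and no $\partial_t V$ ever appears.

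Your final step has a second, smaller omission. The $t=0$ term is on the \emph{right} side of the Carleman inequality and only gives an upper bound. So ``comparing'' requires a lower bound of the left side by $\int_{B_\rho}u^2(x,0)\,dx$. This is supplied by the quantitative monotonicity in time, Lemma~\ref{mon}. Its constant $Ne^{\|V\|_\infty^{1/2}+1}$ must be checked to be dominated by $e^{M^{2/(\beta+3)}}$ before the doubling argument of \cite{AB, ABG} can be run.
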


Theorem \ref{main} shows that the space-like vanishing order of a non-trivial solution to \eqref{meq} is upper bounded by $C M^{\frac{2}{\beta+3}}$. We note that, if $u = u(x)$ is a solution to \eqref{lap}, then by letting $\tilde u(x,t) = u(x)$, $\tilde V(x,t) = V(x)$, we can apply Theorem \ref{main} to $\tilde u, \tilde V$, with $A(x,t) \equiv \mathbb I_n$, and recover the above cited result in \cite{TWZ}.

Since, as we have mentioned, our paper is inspired by their work, we provide a brief description and comparison of our approach versus theirs. The key idea in \cite{TWZ} is based on first  splitting the potential $V$ as
\begin{equation}\label{split}
V= V_\ve + (V-V_\ve), 
\end{equation}
where $V_\ve$ is a standard mollification \eqref{mol} of  $V$, and observing that the estimate \eqref{est1} hold.
The authors then implement a nice weighted variant of the  approach in \cite{GL1, GL2} introduced by Kukavica  in \cite{Ku2}. By combining ideas in \cite{D, Zhu1} with the estimates \eqref{est1}, and finally optimizing the choice of $\ve$, they establish their quantitative $CM^{\frac{2}{\beta+3}}$ uniqueness result. 

In this note we show that, after splitting the zero-order term $V(x,t)$ as in \eqref{split},  the parabolic quantitative Carleman estimates derived in our joint works with Arya \cite{AB, ABG} provide a relatively short proof of our Theorem \ref{main}.

For completeness, we provide the reader with a list of various other works on quantitative uniqueness for elliptic and parabolic equations: \cite{Bk, BCa, BG2, CK,  D, D1,  EV,  KSW, KW,  Ku, Ku2, KL, LW,  Zhu1, Zhu2}.

The paper is organised as follows. In Section \ref{s:n}, we introduce the relevant notations and collect some preliminary results that are used in the proof of Theorem \ref{main}. In Section \ref{s:m} we prove such result.

\section{Notations and Preliminaries}\label{s:n}
In this section we introduce the relevant notation and collect some results that will be used in the Proof of Theorem \ref{main} in Section \ref{s:m}.
Points in $\Rn$ will be denoted by $x, y$, etc. For those in space-time $\Rn \times [0, \infty)$, we will use the notation  $X=(x,t), Y=(y,s)$, etc. Whenever convenient, the partial derivative $\partial_{x_i} f$  will be denoted by $f_i$ or $D_i f$, that in $t$ will be denoted by $f_t$ or by $\p_t f$. We will often write $\nabla f$ and  $\operatorname{div} f$ instead of  $\nabla_x f$ and $ \operatorname{div}_x f$, respectively.  

We recall that the \emph{vanishing order} of a function $u$ at $x$ is the largest  integer $\ell$ such that $D^{\alpha} u (x)=0$ for all $|\alpha| \leq \ell$. Here, with $\mathbb N_0 = \mathbb N \cup \{0\}$, we have denoted by $\alpha=(\alpha_1, ..., \alpha_n)\in \mathbb N_0^n$ a multi-index, and have let  $D^\alpha u(x) = \frac{\p^{\alpha_1+...+\alpha_n} u}{\p x_1^{\alpha_1}...\p x_n^{\alpha_n}}(x)$. Given an open set $\Om\subset \Rn\times \R$,
we indicate with $C_0^{\infty}(\Omega)$ the set of functions in $C^\infty(\Om)$ having compact support in $\Omega$. Also, we will indicate by $dX = dxdt$ the Lebesgue measure in $\Rn\times \R$. We will denote by $B_r(x)$ the ball of radius $r$ centred at $x \in \R^n$, whereas $Q_r(x,t)$ will denote the space-time  cylinder $B_r(x) \times [t,t+r^2]$. When $x = 0$ and $t=0$ we will simply write $B_r$ and $Q_r$, instead of $B_r(0)$ and $Q_r(0,0)$, respectively.

Before stating the main Carleman estimate from \cite{ABG} that we use in this paper, we introduce some preparatory ingredients and  relevant results which are collected in Section 2 of that paper.  In the order, they correspond to \cite[Lemmas 4 \& 5]{EF}, and to \cite[Lemmas 3 \& 4]{EFV}. We consider the function $\theta:(0,1)\to \R^+$ defined by 
\begin{equation}\label{theta}
\theta(t)=t^{1/2}\left( \operatorname{log}\frac{1}{t}\right)^{3/2}.
\end{equation} 
This function satisfies the assumptions
\[
0 \le \theta(t) \le N,\ 0\le t \le 1,\ \ \ \ \ \text{and}\ \ \ \ \int_0^1 \left(1+\log \frac 1t\right) \theta(t) \frac{dt}t \le N.
\]
It follows  that there exists $N>0$ such that
\[
|t\theta'(t)|\le N \theta(t),\ \ \ \ \ \ \ \ \ \ t\in (0,1/2].
\]
	
\begin{lemma}\label{sig}
Let $\theta:[0,1/2]\to \R^+$ be as in \eqref{theta}, and for given $\lambda >0$, let $\s:[0,1/2]\to \R$ be the function
\[
\s(t) = t \exp\left[-\int_0^{\la t} \left(1 - \exp\left(-\int_0^s \theta(\tau) \frac{d\tau}\tau\right)\right)\frac{ds}s\right].
\]
Then $\s$ solves the Cauchy problem 
\begin{align}\label{ode}
- \frac{d}{dt}\operatorname{log} \left(\frac{t\s'}{\s}\right) = \frac{d}{dt}\operatorname{log} \left(\frac{\s}{t\s'}\right)=\frac{\theta(\lambda t)}{t},\;\;\;\;\s(0)=0,\;\;\s'(0)=1,
\end{align}
Moreover, there exist a universal constant $N>0$ such that when $0\le \lambda t \le 1/2$:
\begin{itemize}
\item[(i)] $t/N \le \s(t) \le t$;
\item[(ii)]  $1/N \le \s'(t) \le 1$.
\end{itemize}
\end{lemma}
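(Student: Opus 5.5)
We prove Lemma \ref{sig} by direct computation from the explicit formula for $\s$. Set
\[
\phi(s)=\int_0^s \theta(\tau)\frac{d\tau}{\tau}, \qquad \psi(s) = 1-e^{-\phi(s)},
\]
so that $\s(t)=t\exp\big[-\int_0^{\la t}\psi(s)\frac{ds}{s}\big]$.

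\textbf{Step 1: the defining integrals converge.} Since $\theta(\tau)/\tau = \tau^{-1/2}(\log\frac1\tau)^{3/2}$ is integrable at $0$, $\phi$ is finite and continuous on $[0,1/2]$, with $\phi(0)=0$. Next, $0\le\psi(s)\le\phi(s)$. Moreover
\[
\int_0^{s_0}\phi(s)\frac{ds}{s}=\int_0^{s_0}\theta(\tau)\log\frac{s_0}{\tau}\frac{d\tau}{\tau},
\]
by Fubini, and this is at most $N$ by the stated integrability assumption $\int_0^1(1+\log\frac1t)\theta(t)\frac{dt}t\le N$. Hence the outer integral defining $\s$ converges, and
\[
0\le \int_0^{\la t}\psi(s)\frac{ds}{s}\le N \qquad \text{for } \la t\le 1/2.
\]

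\textbf{Step 2: the ODE.} Differentiating $\log\s(t)=\log t-\int_0^{\la t}\psi(s)\frac{ds}{s}$ gives
\[
\frac{\s'}{\s}=\frac1t-\frac{\psi(\la t)}{t},
\]
the factor $\la$ from the chain rule cancelling against $1/(\la t)$. Therefore
\[
\frac{t\s'}{\s}=1-\psi(\la t)=e^{-\phi(\la t)},\qquad\text{that is,}\qquad \log\frac{\s}{t\s'}=\phi(\la t).
\]
Differentiating once more,
\[
\frac{d}{dt}\,\phi(\la t)=\la\,\theta(\la t)\frac{1}{\la t}=\frac{\theta(\la t)}{t},
\]
which is exactly \eqref{ode}.

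\textbf{Step 3: the initial conditions.} From Step 1, the exponential factor lies in $[e^{-N},1]$, so $\s(t)\to0$ as $t\to 0$. Also
\[
\s'(t)=\frac{\s(t)}{t}\,e^{-\phi(\la t)}=\exp\Big[-\int_0^{\la t}\psi\frac{ds}{s}-\phi(\la t)\Big]\to 1 \quad\text{as } t\to0,
\]
since both exponents tend to $0$. This gives $\s(0)=0$ and $\s'(0)=1$.

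\textbf{Step 4: the bounds (i) and (ii).} Part (i) is immediate: $e^{-N}t\le\s(t)\le t$ by the bound on the integral in Step 1. For (ii), the formula for $\s'$ in Step 3 gives $\s'\le1$. For the lower bound we use $\phi(\la t)\le\phi(1/2)\le N$, which together with Step 1 yields $\s'\ge e^{-2N}$.

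The only step that needs care is Step 1, namely the convergence and uniform bound of the double integral $\int\psi\,ds/s$, which relies on the Fubini identity and the log-weighted integrability of $\theta$. All remaining steps are routine differentiation.
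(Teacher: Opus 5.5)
Your proof is correct and follows the standard route. The paper gives no proof of this lemma; it defers to \cite[Lemmas 4 \& 5]{EF} and \cite[Lemmas 3 \& 4]{EFV}, where the statement is checked in the same way as your Steps 1--4: differentiate the explicit formula for $\s$, and bound $\int_0^{\la t}\psi(s)\,\frac{ds}{s}$ and $\phi(\la t)$ uniformly for $\la t\le 1/2$ using the log-weighted integrability of $\theta$.
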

Henceforth in this paper, we will denote by $G$ the function in $\Rn\times (0,\infty)$ thus defined
\begin{equation}\label{G}
G(x,t) := t^{-n/2} e^{-\frac{|x|^2}{4t}}.
\end{equation}
In what follows, with $\s$ defined as in Lemma \ref{sig}, and $G$ as in \eqref{G}, for any given number $a\in [0,1/2]$ we let $\s_a(t)=\s(t+a)$ and  $G_a(x,t)=G(x,t+a)$. One should keep in mind that the domain of $\s_a$ is $[-a,\frac 12 - a].$ By combing the proof of Theorem 3.1 and Lemma 6.1 in \cite{ABG}, one obtains the following  quantitative Carleman estimate which will be used in the proof of Theorem \ref{main}.

\begin{thrm}\label{carleman}
Let $A(0,0)=\mathbb I_n$ and $V_1 \in C^1(B_4), V_2 \in L^{\infty}(B_4)$ be time-independent real-valued functions.  There exist universal constants $N >1$ and $\delta \in (0,1)$ such that, if 
\begin{equation}\label{quanta}
\A \ge N\left(1+||V_1||_{C^{1}(B_4)}^{1/2} +||V_2||_{L^{\infty}(B_4)}^{2/3}\right),
\end{equation}  
and $\lambda=\A/\delta^2$, then the following inequality holds for all $w \in C_{0}^{\infty}\left(B_4 \times [0,\frac{1}{4\lambda})\right)$ and $0<a\le \frac{1}{4\lambda}$:	
\begin{align}\label{carl0}
& \A^2 \int_{B_4 \times [0,\frac{1}{4\lambda}) }\s_a^{-2\A}w^2G_adX +\A \int_{B_4 \times [0,\frac{1}{4\lambda}) } \s_a^{1-2\A}|\n w|^2G_adX\\
& \le N \int_{B_4 \times [0,\frac{1}{4\lambda}) } \s_a^{1-2\A}\left[w_t + \D(A(x,t) \n w)+ V_1(x) w+V_2(x)w\right]^2 G_adX
\notag\\
& +N^{2\A}\A^{2\A}\underset{t \ge 0}{\operatorname{sup}}\int_{\Rn} \left[w^2
+|\n w|^2\right] dx
\notag\\
&+\s(a)^{-2\A}\left(-\frac{a}{N}\int_{\Rn} |\n w (x,0)|^2G(x,a)dx + N \A \int_{\Rn} w^2(x,0)G(x,a)dx\right).
\notag
\end{align}  
\end{thrm}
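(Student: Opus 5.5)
The plan is to prove Theorem \ref{carleman} in two stages. First I establish the estimate for the principal operator $w_t+\D(A\n w)$ alone, which is the content of Theorem 3.1 in \cite{ABG}. Then I add the zero-order terms $V_1 w$ and $V_2 w$ by perturbation. The two potentials are handled differently, and this difference is what produces the two exponents $1/2$ and $2/3$ in \eqref{quanta}.

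\emph{Stage one: the principal part.} Following \cite{EF, EFV}, I conjugate by the weight. Set $f=\s_a^{-\A}G_a^{1/2}w$ and write the conjugated operator as a symmetric part $\mathcal S$ plus an antisymmetric part $\mathcal A$ in $L^2(\s_a\,dX)$. I then expand
\[
\|\mathcal S f+\mathcal A f\|^2=\|\mathcal S f\|^2+\|\mathcal A f\|^2+2\langle \mathcal S f,\mathcal A f\rangle .
\]
The cross term becomes a commutator $\langle[\mathcal S,\mathcal A]f,f\rangle$ plus boundary terms at $t=0$.

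\begin{itemize}
\item Lemma \ref{sig} is the source of the positive weighted terms. The ODE \eqref{ode} for $\s$ is designed so that $\frac{d}{dt}\log(\s/t\s')=\theta(\la t)/t$. Consequently the commutator contains a positive multiple of $\A\,\theta(\la t)/t$. This absorbs the errors coming from the variable coefficients, which under \eqref{ass} are $O(|x|+t^{1/2})$ near $(0,0)$, where $A(0,0)=\mathbb I_n$.
\item Using $\s_a\sim t+a$ from Lemma \ref{sig}, together with the Gaussian bound $|x|^2G_a\lesssim (t+a)G_a$, the commutator yields the two left-hand terms $\A^2\int\s_a^{-2\A}w^2G_a$ and $\A\int\s_a^{1-2\A}|\n w|^2G_a$.
\item The region $|x|\gtrsim$ const, where the Gaussian weight no longer controls the coefficient errors, is treated as in Lemma 6.1 of \cite{ABG}. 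There $G_a\s_a^{-2\A}\le N^{2\A}\A^{2\A}$ for $t<1/(4\la)$, which gives the term $N^{2\A}\A^{2\A}\sup_t\int(w^2+|\n w|^2)$.
\item The boundary integrals at $t=0$ do not vanish because $a>0$, so $\s_a(0)=\s(a)\neq 0$. They produce the last line of \eqref{carl0}: a term with a good sign, $-\frac aN\int|\n w(\cdot,0)|^2G(\cdot,a)$, and a term $N\A\int w^2(\cdot,0)G(\cdot,a)$.
\end{itemize}

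\emph{Stage two: the potential $V_2\in L^\infty$.} I move $V_2w$ to the right-hand side crudely:
\[
\int\s_a^{1-2\A}V_2^2w^2G_a\le \|V_2\|_\infty^2\sup\s_a\int\s_a^{-2\A}w^2G_a .
\]
On the support, $\s_a\le t+a\le 1/(2\la)=\delta^2/(2\A)$. The error is therefore at most $N\|V_2\|_\infty^2\delta^2\A^{-1}\int\s_a^{-2\A}w^2G_a$. This can be absorbed into $\A^2\int\s_a^{-2\A}w^2G_a$ provided $\A^3\gtrsim\|V_2\|_\infty^2$, which is exactly the condition $\A\ge N\|V_2\|_\infty^{2/3}$.

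\emph{Stage two: the potential $V_1\in C^1$.} Here the crude absorption would again only give $\A\gtrsim\|V_1\|^{2/3}$, so I do something else. I include $V_1$ in the symmetric part, replacing $\mathcal S$ by $\mathcal S+V_1$. The extra contribution to the commutator comes from $\langle V_1 f,\mathcal A f\rangle$. Since $\mathcal A$ is essentially $\s_a\p_t$ plus the first-order part $\s_a\tfrac{x}{2(t+a)}\cdot\n$, integration by parts turns this into weighted integrals of $(x\cdot\n V_1)f^2$ and $V_1\s_a'f^2$; the time derivative $\p_t V_1$ is zero because $V_1$ is time-independent. These integrals are bounded by $N\|V_1\|_{C^1}\int\s_a^{-2\A}w^2G_a$, with one fewer power of $\A$ than in the $V_2$ case. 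They are absorbed as soon as $\A^2\gtrsim\|V_1\|_{C^1}$, giving the exponent $1/2$.

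\emph{Main obstacle.} I expect the hardest step to be this $V_1$ computation carried out in the presence of the variable-coefficient errors and the $t=0$ boundary terms with $a>0$. One has to check that no term of size $\|V_1\|^2\s_a$ survives. If one does survive, it would bring back the weaker $2/3$ exponent for $V_1$.
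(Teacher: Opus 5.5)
Your proposal is correct and follows essentially the paper's route. The paper simply combines the proofs of Theorem 3.1 and Lemma 6.1 of \cite{ABG}: the principal-part Carleman estimate, crude absorption of $V_2$ using $\sigma_a\le \delta^2/(2\alpha)$ (which forces $\alpha^3\gtrsim\|V_2\|_{L^\infty}^2$), and placement of the time-independent $V_1$ in the symmetric part, so that integration by parts against the antisymmetric part produces only terms linear in $V_1$ and $x\cdot\nabla V_1$ (which forces $\alpha^2\gtrsim\|V_1\|_{C^1}$). The obstacle you flag does not occur, because $\|(\mathcal S+V_1)f\|^2$ is kept whole on the good side and never split, so $V_1$ enters only linearly. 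This includes the $t=0$ boundary term, which is bounded by $\|V_1\|_\infty\,\sigma(a)\,\sigma(a)^{-2\alpha}\int w^2(x,0)G(x,a)\,dx\le \alpha\,\sigma(a)^{-2\alpha}\int w^2(x,0)G(x,a)\,dx$.
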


We close this section by stating the above mentioned approximation lemma following \cite{TWZ}. Let $V = V(x,t)$ be as in \eqref{vassump}, \eqref{lm}, and define $V_0(x)= V(x, 0)$.
Let $\rho \in C^{\infty}_{0} (B_1)$ be such that $\int_{\Rn} \rho dx =1$, denote by $\rho_{\ve} (x) :=\frac{1}{\ve^n} \rho(x/\ve)$ the associated approximate identity, and let
\begin{equation}\label{mol}
V_\ve(x)= V_0 \ast \rho_{\ve}(x).
\end{equation}
The following estimate, which uses the $C^{\beta}$ character of $V_0$, can be found in \cite[Lemma 1]{TWZ}:
\begin{equation}\label{est1}
\begin{cases}
||V_{\ve}||_{C^{1}(B_3)} \leq C_1M \ve^{\beta-1},
\\
||V_0-V_{\ve}||_{L^{\infty}(B_3)} \leq C_1M\ve^{\beta},
\end{cases}
\end{equation}
where $C_1>0$ is a universal constant.  

\section{Proof of Theorem \ref{main}}\label{s:m}
From Theorem \ref{carleman}, the following quantitative Carleman estimate follows for H\"older potentials.
\begin{thrm}\label{carleman1}
Suppose that $V$ satisfies \eqref{vassump} and, without loss of generality, assume that $A(0,0)=\mathbb I_n$. Let $||V||_{H^{\beta}(Q_4)}=M$.   There exist universal constants $N >1$ and $\delta \in (0,1)$ such that, if 
\begin{equation}\label{quanta2}
\A \ge N\left(1+ M^{\frac{2}{\beta+3}} \right), 
\end{equation}  
and $\lambda=\A/\delta^2$, then the following inequality holds for all $w \in C_{0}^{\infty}\left(B_4 \times [0,\frac{1}{4\lambda})\right)$ and $0<a\le \frac{1}{4\lambda}$	
\begin{align}\label{carl1}
& \A^2 \int_{B_4 \times [0,\frac{1}{4\lambda}) }\s_a^{-2\A}w^2G_adX +\A \int_{B_4 \times [0,\frac{1}{4\lambda}) } \s_a^{1-2\A}|\n w|^2G_adX\\
& \le N \int_{B_4 \times [0,\frac{1}{4\lambda}) } \s_a^{1-2\A}\left[w_t + \D(A(x,t) \n w)+ Vw\right]^2 G_adX
\notag\\
& +N^{2\A}\A^{2\A}\underset{t \ge 0}{\operatorname{sup}}\int_{\Rn} \left[w^2
+|\n w|^2\right] dx
\notag\\
&+\s(a)^{-2\A}\left(-\frac{a}{N}\int_{\Rn} |\n w (x,0)|^2G(x,a)dx + N \A \int_{\Rn} w^2(x,0)G(x,a)dx\right).
\notag
\end{align}  
\end{thrm}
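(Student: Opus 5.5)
The plan is to reduce Theorem \ref{carleman1} to Theorem \ref{carleman} by splitting $V$ into three pieces:
\[
V(x,t) = V_\ve(x) + \bigl(V_0(x) - V_\ve(x)\bigr) + \bigl(V(x,t) - V_0(x)\bigr),
\]
with $V_\ve$ as in \eqref{mol}. We set $V_1 = V_\ve$, which is time-independent and $C^1$, and $V_2 = V_0 - V_\ve$, which is time-independent and bounded. The third piece, $W(x,t):=V(x,t)-V(x,0)$, is not time-independent, so it cannot be fed into Theorem \ref{carleman}; we treat it as a perturbation. By \eqref{vassump}, $|W(x,t)|\le M t^{\beta/2}$. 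On the support of $w$ we have $t<\frac{1}{4\lambda}$ with $\lambda=\A/\delta^2$, hence
\[
|W|\le M \lambda^{-\beta/2} \le C M \A^{-\beta/2}.
\]

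\textbf{Step 1 (apply Theorem \ref{carleman}).} By \eqref{est1}, $\|V_1\|_{C^1}^{1/2}\le (C_1M)^{1/2}\ve^{(\beta-1)/2}$ and $\|V_2\|_{L^\infty}^{2/3}\le (C_1M)^{2/3}\ve^{2\beta/3}$. Balancing the two by choosing $\ve = M^{-1/(\beta+3)}$ gives
\[
M^{1/2}\ve^{(\beta-1)/2} = M^{\frac{1}{2}+\frac{1-\beta}{2(\beta+3)}} = M^{\frac{2}{\beta+3}}, \qquad M^{2/3}\ve^{2\beta/3} = M^{\frac23\left(1-\frac{\beta}{\beta+3}\right)} = M^{\frac{2}{\beta+3}}.
\]
So \eqref{quanta2}, with $N$ enlarged by a factor depending on $C_1$, implies \eqref{quanta}. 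Theorem \ref{carleman} then yields \eqref{carl0} with $V_1+V_2 = V_0$ inside the operator.

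\textbf{Step 2 (absorb $W$).} In the first term on the right of \eqref{carl0}, write the operator as $L_V w - Ww$, where $L_V w$ is the full operator with $V$. This gives
\[
N\int \s_a^{1-2\A}[\cdots]^2 G_a \le 2N\int \s_a^{1-2\A}[w_t+\D(A\n w)+Vw]^2G_a + 2N\int \s_a^{1-2\A}W^2w^2G_a .
\]
On the support, Lemma \ref{sig} gives $\s_a\le t+a\le \frac{1}{2\lambda}\lesssim \A^{-1}$. Hence the error term is bounded by
\[
C\,\A^{-1} M^2 \A^{-\beta}\int \s_a^{-2\A}w^2G_a .
\]
This can be absorbed into $\A^2\int\s_a^{-2\A}w^2G_a$ provided $CM^2\A^{-1-\beta}\le \A^2/2$, that is $\A \ge C M^{\frac{2}{\beta+3}}$. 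This is exactly the threshold in \eqref{quanta2}, which also confirms that the exponent is consistent. Absorbing, and renaming $N$, gives \eqref{carl1}.

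\textbf{Expected obstacle.} The main point to check is Step 2. The time-dependence of $V$ must be handled through the parabolic Hölder modulus together with the smallness of the time support, which is of size $\A^{-1}$. One must verify that the factor $\s_a^{1-2\A}$ versus $\s_a^{-2\A}$ supplies the extra $\A^{-1}$; this follows from the upper bound $\s_a(t)\le t+a$ in Lemma \ref{sig}(i). The remaining care is bookkeeping: the mollification should be done on $B_3$ or $B_4$ consistently with the domain in \eqref{est1}, possibly after slightly shrinking the domain or extending $V_0$.
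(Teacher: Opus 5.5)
Your proposal is correct and follows the paper's proof essentially step for step. The paper also applies Theorem \ref{carleman} to $V_0=V_\ve+(V_0-V_\ve)$, minimizing $1+M^{1/2}\ve^{-\frac{1-\beta}{2}}+M^{2/3}\ve^{\frac{2\beta}{3}}$ over $\ve$ (your choice $\ve=M^{-1/(\beta+3)}$ gives the same bound up to constants), and then absorbs the time-dependent remainder $(V(x,t)-V(x,0))^2\le M^2t^\beta$ using $\s_a\le t+a\le 1/(2\lambda)$ and $\lambda\ge\A$, arriving at the same threshold $\A^{3+\beta}\ge CM^2$; your remark about the $B_3$ versus $B_4$ mismatch in \eqref{est1} is a fair bookkeeping point that the paper passes over silently.
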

\begin{proof}
We proceed in two steps:
\begin{itemize}
\item[(I)] We establish \eqref{carl1} with $V(x, t)$ replaced by $V_0(x)= V (x, 0)$.
\item[(II)] We prove \eqref{carl1} for $V(x,t)$.
\end{itemize}

\noindent (I): Write $V_0 = V_1 + V_2$, where for $V_\ve$ as in \eqref{mol}, we have let  $V_1= V_{\ve}$ and $V_2= V_0 -V_\ve$. We note that, by applying \eqref{est1}, we have for any $\e>0$:
\begin{align*}
& 1+||V_1||_{C^{1}(B_4)}^{1/2} +||V_2||_{L^{\infty}(B_4)}^{2/3} \le 1+ C_1M^{1/2} \ve^{\beta-1} + C_1M^{2/3}\ve^{\beta}
 \le \max\{1,C_1\} \left(1 + M^{1/2} \ve^{-\frac{1-\beta}{2}} + M^{2/3} \ve^{\frac{2\beta}{3}}\right).
\end{align*}
The function in the right-hand side is of the type
\[
g(\e) = 1 + A \e^{-\alpha} + B \e^\gamma,
\]
with $\alpha = \frac{1-\beta}2$, $\gamma = \frac{2\beta}3$, $A = M^{1/2}$, $B = M^{2/3}$. The minimum value of $g$ is 
\[
g_{\min} = 1 + \left[\left(\frac{\gamma}{\alpha}\right)^{\frac{\alpha}{\alpha+\gamma}} + \left(\frac{\alpha}{\gamma}\right)^{\frac{\gamma}{\alpha+\gamma}}\right] A^{\frac{\gamma}{\alpha+\gamma}} B^{\frac{\alpha}{\alpha+\gamma}}.
\]
Substituting values, and adjusting the choice of $N$, we see that for some $N = N(\beta)>1$ we have
\[
1+||V_1||_{C^{1}(B_4)}^{1/2} +||V_2||_{L^{\infty}(B_4)}^{2/3} \le N(1+M^{\frac{2}{\beta+3}}).
\]

From these considerations we conclude that, if $\alpha$ satisfies \eqref{quanta2}, then it verifies \eqref{quanta}.
We are thus in a position to apply 
the Carleman estimate in Theorem \ref{carleman} and obtain
\begin{align}\label{carl2}
& \A^2 \int_{B_4 \times [0,\frac{1}{4\lambda}) }\s_a^{-2\A}w^2G_adX +\A \int_{B_4 \times [0,\frac{1}{4\lambda}) } \s_a^{1-2\A}|\n w|^2G_adX\\
& \le N \int_{B_4 \times [0,\frac{1}{4\lambda}) } \s_a^{1-2\A}\left[w_t + \D(A(x,t) \n w)+ V_0w\right]^2 G_adX
\notag\\
& +N^{2\A}\A^{2\A}\underset{t \ge 0}{\operatorname{sup}}\int_{\Rn} \left[w^2
+|\n w|^2\right] dx
\notag\\
&+\s(a)^{-2\A}\left(-\frac{a}{N}\int_{\Rn} |\n w (x,0)|^2G(x,a)dx + N \A \int_{\Rn} w^2(x,0)G(x,a)dx\right).
\notag
\end{align} 
This proves \eqref{carl1} with $V_0(x)$ in place of $V(x, t)$, thus completing step (I).
 
\medskip

\noindent (II): We finally want to obtain \eqref{carl1} for $V(x,t)$. With this goal in mind, we observe that by the Cauchy-Schwarz and triangle inequalities, we have
\begin{align*}
& |(w_t + \D(A(x,t) \n w)+V(x,0)w )|^2
\\
& =  |w_t+ (\D(A(x,t) \n w)+V(x,t)w )+(V(x,0)-V(x,t))w |^2
\\
& \le 2(w_t + \D(A(x,t) \n w)+V(x,t)w)^2+2(V(x,0)-V(x,t))^2w^2.
\end{align*}
On the other hand, \eqref{vassump} gives 
\begin{equation*}
(V(x,0)-V(x,t))^2 \le M^{2} t^{\beta}.
\end{equation*}
Using the latter two inequalities, and keeping in mind that $V_0(x) = V(x,0)$, we find 
\begin{align}\label{v10}
& N \int \s_a^{1-2\A}\left[w_t + \D(A(x,t) \n w)+w_t+V_0 w\right]^2 G_adX
\\ & \le  2N \int \s_a^{1-2\A}\left[w_t+ \D(A(x,t) \n w) +V(x,t)w\right]^2 G_adX
 \notag
 \\
 &+ 2N M^2 \int \s_a^{1-2\A}t^{\beta}w^2 G_adX.
 \notag
\end{align}
We would be done if  we could absorb the last  term in the right-hand side of \eqref{v10}  into  the left-hand side of \eqref{carl2}. Since $\lambda t \le 1/2$  and $\s_a(t) \le t+a \le 1/\lambda$, we have
\begin{align}\label{kv1}
&2N M^2 \int \s_a^{1-2\A}t^{\beta}w^2 G_adX.\\ &   \le 	\frac{ 2N}{2^\beta \lambda^{1+\beta}} M^2 \int \s_a^{-2\A}w^2 G_a dX.\notag
\end{align}
Inserting \eqref{v10} and \eqref{kv1} in \eqref{carl2}, and keeping in mind that, for $\delta \leq \frac{1}{\sqrt{2}}$, we have $\alpha \leq \frac{\A}{2\delta^2} = \frac{\lambda}{2}$, we obtain for a new $N$ that the following estimate holds
\begin{align}\label{f2}
&	\A^2 \int \s_a^{-2\A}w^2G_a dX +\A \int \s_a^{1-2\A}|\n w|^2G_adX\\
& \le N \int \s_a^{1-2\A}(w_t + \D(A(x,t) \n w)+V(x,t)w)^2 G_adX\notag\\
&+\frac{ N}{\A^{1+\beta}} M^2 \int \s_a^{-2\A}w^2 G_adX +N^{2\A}\A^{2\A}\underset{t \ge 0}{\operatorname{sup}}\int [w^2
+|\n w|^2] dx\notag\\
&+\s(a)^{-2\A}\left(-\frac{a}{N}\int |\n w (x,0)|^2G(x,a)dx + N \A \int w^2(x,0)G(x,a)dx\right).\notag
\end{align}
Now observe that the second term in the right-hand side of \eqref{f2} can be absorbed in the first  term of left-hand side of \eqref{f2}, provided that 
\begin{align}\label{last}
\frac{\A^2}{2} \ge \frac{ N}{\alpha^{1+\beta}} M^2 \Longleftrightarrow\  \A  \geq (2N)^{\frac{1}{\beta+3}} M^{\frac{2}{\beta+3}}.
\end{align}
Since we are imposing on $\alpha$ that it satisfy \eqref{quanta2}, it is clear that we can achieve \eqref{last}. Tracing back our steps, we obtain \eqref{carl1} for $V(x,t)$. This completes the proof.

\end{proof}

The last ingredient we need is the following quantitative monotonicity in time result. Before stating it, we recall the following quantities from \cite{ABG} defined on a solution $u$ to \eqref{meq} in $Q_4$ we define for any $\rho\in (0,1]$: 
\[
\Theta_{\rho} :=\frac{\int_{Q_{3}} u^2( x, t)dX}{\rho^2 \int_{B_{\rho}}u^2( x,0)dx},
\]
and 
\[
T_{\rho,N} := \frac{\rho^2}{2N\operatorname{log}(2N(1+||V||_{\infty})^2{\Theta_{\rho}})+ 5 N^2 (||V||_{\infty}^{1/2}+1)}.
\]

\begin{lemma}\label{mon}
Let $u$ be a solution to \eqref{meq} in $Q_4$. Then there exists a universal constant $N>1$ such that, for every $\rho\in (0,1]$ and $t\le T_{\rho,N}$, the following inequality holds
\begin{align}\label{mon1}
Ne^{ M^{\frac{2}{\beta+3}}} \int_{B_{2\rho}} u^2(x,t)dx \ge \int_{B_{\rho}} u^2(x,0)dx.
\end{align}
\end{lemma}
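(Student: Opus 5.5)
We follow the strategy of the corresponding monotonicity result in \cite{ABG} (and \cite{EFV}), applying the new Carleman estimate of Theorem \ref{carleman1} instead of Theorem \ref{carleman}. The point is that Theorem \ref{carleman1} holds as soon as $\A \ge N(1+M^{\frac{2}{\beta+3}})$. Since the constant in \eqref{mon1} comes out as $N^{\A}$-type factors, i.e.\ $e^{N\A}$, we obtain $e^{M^{\frac{2}{\beta+3}}}$ up to universal constants. By translation and rescaling $x\mapsto x/\rho$, $t\mapsto t/\rho^2$ we may reduce to a fixed scale. Rescaling only decreases the H\"older and $L^\infty$ norms of the rescaled potential, so the threshold on $\A$ persists.

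\textbf{Step 1: localization and the test function.} Fix a cutoff $\phi\in C_0^\infty(B_{2\rho})$ with $\phi\equiv 1$ on $B_{3\rho/2}$, and a time cutoff $\eta(t)$ equal to $1$ for $t\le \frac{1}{8\lambda}$ and vanishing near $t=\frac{1}{4\lambda}$. We apply \eqref{carl1} to $w=\phi\,\eta\, u$. Then $Lw = \eta\,(2\sa A\n\phi,\n u\da + u\D(A\n\phi)) + \phi\eta' u$. These error terms are supported either where $|x|\ge 3\rho/2$ or where $t\ge \frac{1}{8\lambda}$. In the first region, $G_a\le (t+a)^{-n/2}e^{-9\rho^2/(16(t+a))}$ is exponentially small. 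In the second region, the weight $\s_a^{1-2\A}$ is comparatively small relative to the left-hand side near $t=0$. Caccioppoli (energy) estimates bound $\n u$ by $u$ on slightly larger cylinders, so all these errors are controlled by $N^{\A}\A^{\A}\int_{Q_3}u^2$, times Gaussian factors. The sup term $N^{2\A}\A^{2\A}\sup_t\int(w^2+|\n w|^2)$ is likewise bounded by energy estimates by $N^{2\A}\A^{2\A}\int_{Q_4}u^2$.

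\textbf{Step 2: lower bound by the $t=0$ data.} We choose $a$ comparable to $t$, with $t\le T_{\rho,N}$ in the rescaled scale. The key mechanism, as in \cite{EFV}, is to pick $a$ small, of order $\rho^2/(\A+\log\Theta_\rho)$, so that $\int w^2(x,0)G(x,a)$ concentrates on $B_\rho$. We then use the boundary term in \eqref{carl1} with the opposite sign. Rather than directly, we use the standard variant: one integrates the left-hand side of \eqref{carl1} in the time slab $[0,a]$, and the $t=0$ boundary contribution $N\A\s(a)^{-2\A}\int u^2(x,0)G(x,a)$ must be compared with the time slice $t$. Following \cite{ABG}, the cleaner route is to apply the estimate to obtain, for $t\le T_{\rho,N}$,
\[
\int_{B_\rho} u^2(x,0)\,dx \le N e^{N\A}\int_{B_{2\rho}}u^2(x,t)\,dx + \text{(error)}\cdot e^{-c\rho^2/t}\,\Theta_\rho\,\rho^2\int_{B_\rho}u^2(x,0)\,dx .
\]
The restriction $t\le T_{\rho,N}$, with its $\log\Theta_\rho$ in the denominator, is exactly what makes $e^{-c\rho^2/t}\Theta_\rho N^{\A}\le \frac12$, so that the error can be absorbed into the left-hand side.

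\textbf{Step 3: choice of $\A$.} We take $\A = N(1+M^{\frac{2}{\beta+3}})$, the smallest admissible value in \eqref{quanta2}. Then $e^{N\A}$ is bounded by $N e^{N M^{\frac{2}{\beta+3}}}$, which after renaming constants gives \eqref{mon1}.

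The main obstacle is Step 2. One must check that the interplay between the boundary term at $t=0$, the Gaussian weights, and the threshold $T_{\rho,N}$ from \cite{ABG} produces a factor depending only on $\A$, hence only on $M^{\frac{2}{\beta+3}}$, and not on $\|V\|_\infty^{2/3}$. Here $T_{\rho,N}$ already involves only $\|V\|_\infty^{1/2}\le M^{1/2}$, and $M^{1/2}\le M^{\frac{2}{\beta+3}}$ since $\frac{2}{\beta+3}\ge\frac12$. We therefore expect the argument of \cite[Lemma 6.1]{ABG} to go through verbatim with Theorem \ref{carleman1} in place of its Carleman estimate.
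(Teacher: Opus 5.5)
Your proposal has a genuine gap at Step 2, which is where all the work would be, and that step is never carried out. The mechanism you suggest cannot produce your displayed inequality. In \eqref{carl1} the slice term $N\alpha\,\sigma(a)^{-2\alpha}\int w^2(x,0)G(x,a)\,dx$ sits on the right-hand side, so the Carleman estimate uses the $t=0$ data as an \emph{upper} bound for space-time integrals. The only $t=0$ quantity you can move to the left is $\frac{a}{N}\sigma(a)^{-2\alpha}\int|\nabla w(x,0)|^2G(x,a)\,dx$, and this does not control $\int_{B_\rho}u^2(x,0)\,dx$. Hence the bound of $\int_{B_\rho}u^2(x,0)\,dx$ by $Ne^{N\alpha}\int_{B_{2\rho}}u^2(x,t)\,dx$ plus an absorbable error is asserted rather than derived. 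The reference to \cite[Lemma 6.1]{ABG} is also misplaced: that lemma enters the derivation of Theorem \ref{carleman}. Monotonicity in time is a separate ingredient of the doubling argument, namely \cite[Lemma 4.1]{ABG}, and it is proved directly from the equation, not from the Carleman estimate.

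Even granting Step 2, Step 3 does not yield \eqref{mon1}. With $\alpha\ge N(1+M^{\frac{2}{\beta+3}})$, the factor $e^{N\alpha}$ is of size $e^{N^2M^{\frac{2}{\beta+3}}}$. A constant multiplying $M^{\frac{2}{\beta+3}}$ in the exponent cannot be renamed into the prefactor, whereas \eqref{mon1} has coefficient $1$ there. The weaker bound would suffice for Theorem \ref{main}, but it is not the stated lemma.

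The missing idea is the observation you make only in passing, about $T_{\rho,N}$; it is in fact the entire proof. \cite[Lemma 4.1]{ABG} already gives, for $t\le T_{\rho,N}$,
\[
Ne^{\|V\|_{\infty}^{1/2}+1}\int_{B_{2\rho}}u^2(x,t)\,dx\ \ge\ \int_{B_\rho}u^2(x,0)\,dx,
\]
with no H\"older assumption on $V$. Since $\|V\|_{\infty}\le M$, $M\ge 1$ and $\frac{2}{\beta+3}>\frac12$, one has
\[
e^{\|V\|_{\infty}^{1/2}+1}\le e\,e^{M^{1/2}}\le e\,e^{M^{\frac{2}{\beta+3}}},
\]
and \eqref{mon1} follows after enlarging $N$. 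No rescaling, cutoffs, or Carleman estimate are needed; the splitting of $V$ matters only for Theorem \ref{carleman1}.
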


\begin{proof}
In \cite[Lemma 4.1]{ABG} the following estimate was proven for $t \leq T_{\rho, N}$ and $N$ large
\begin{equation}\label{mo2}
Ne^{ (||V||_{\infty}^{1/2}+1)}\int_{B_{2\rho}} u^2(x,t)dx \ge \int_{B_{\rho}} u^2(x,0)dx.\end{equation}
Using $||V||_{\infty} \leq M$, with the normalization $M\geq 1$,  and the fact that $\frac{2}{\beta+3} > \frac{1}{2}$ for $\beta \in (0, 1)$, we find that the desired estimate \eqref{mon1} follows from \eqref{mo2}. 

\end{proof}

Finally, we now turn to

\begin{proof}[Proof of Theorem \ref{main}]
With Theorem \ref{carleman1} and Lemma \ref{mon} in hand, both of which  have quantitative dependence on the exponent $M^{\frac{2}{\beta+3}}$, one can repeat the arguments in the proof of either \cite[Theorem 1]{AB} or \cite[Theorem 1.1]{ABG} to assert that the quantitative doubling inequality \eqref{df} in Theorem \ref{main} holds.

\end{proof}

\end{document}